\numberwithin{equation}{section}
\newtheorem{theorem}{Theorem}[section]
\newtheorem{proposition}[theorem]{Proposition}
\newtheorem{corollary}[theorem]{Corollary}
\theoremstyle{definition}
\newtheorem{definition}[theorem]{Definition}
\newtheorem{example}[theorem]{Example}
\newtheorem{remark}[theorem]{Remark}
\DeclareMathOperator{\Col}{\mathsf{PCol}}
\DeclareMathOperator{\type}{\mathsf{type}}
\DeclareMathOperator{\bitype}{\mathsf{btype}}
\DeclareMathOperator{\wtype}{\mathsf{wtype}}
\DeclareMathOperator{\wt}{\mathsf{wt}}
\DeclareMathOperator{\Mac}{\mathsf{Mac}}
\newcommand{\anti}{\mathsf{S}} 
\newcommand{\partn}{\vdash}
\newcommand{\sm}{\setminus}
\newcommand{\0}{\emptyset}
\newcommand{\littletaller}{\mathchoice{\vphantom{\big|}}{}{}{}}
\newcommand\restr[2]{{
  \left.\kern-\nulldelimiterspace 
  #1 
  \littletaller 
  \right|_{#2} 
  }}
\newcommand{\defterm}[1]{\textbf{#1}} 
\newcommand{\bLambda}{\boldsymbol{\Lambda}}
\newcommand{\blambda}{\boldsymbol{\lambda}}
\newcommand{\bOmega}{\boldsymbol{\Omega}}
\newcommand{\NN}{\mathbb{N}}	\newcommand{\Nn}{\mathbb{N}}
\newcommand{\PP}{\mathbb{P}}	\newcommand{\Pp}{\mathbb{P}}
\newcommand{\CC}{\mathbb{C}}	\newcommand{\Cc}{\mathbb{C}}
\renewcommand{\AA}{\mathcal{A}}
\newcommand{\Sym}{\mathfrak{S}}
\newcommand{\CSF}{\mathbf{X}}
\newcommand{\CMF}{\tilde{\mathbf{X}}}
\newcommand{\GDP}{\mathbf{G}}
\newcommand{\EGDP}{\tilde{\mathbf{G}}}
\newcommand{\ext}{\text{ext}}
\newcommand{\interior}{\text{int}}
\author[Martin]{Jeremy L. Martin}
\address[Jeremy L.~Martin]{Department of Mathematics, University of Kansas, Lawrence, KS 66045}
\email{\textcolor{blue}{\href{mailto:jlmartin@ku.edu}{jlmartin@ku.edu}}} 
\author[Trist]{May B. Trist}
\address[May B.~Trist]{Department of Mathematics, University of Kansas, Lawrence, KS 66045}
\email{\textcolor{blue}{\href{mailto:may.b.trist@gmail.com}{may.b.trist@gmail.com}}} 
\title{Chromatic MacMahon symmetric functions of graphs}
\date{\today}
\keywords{Chromatic symmetric function, generalized degree sequence, weighted graph, MacMahon symmetric function, Crew's conjecture}
\subjclass[2020]{%
05C05, 
05C22, 
05E05, 
16T30} 
\begin{document}

\begin{abstract}
A MacMahon symmetric function is an invariant of the diagonal action of the symmetric group on power series in multiple alphabets of variables.  We introduce an analogue of the chromatic symmetric function for vertex-weighted graphs, taking values in the MacMahon symmetric functions on two sets of variables, recording information about both cardinalities and weights of vertex sets.  We prove that the chromatic symmetric MacMahon function of a tree determines the generating function for its vertex subsets by cardinality, weight, and the numbers of internal and external edges.  This result generalizes the one for the unweighted case, first conjectured by Crew and proved independently by Aliste-Prieto--Martin--Wagner--Zamora and Liu--Tang.
\end{abstract}

\maketitle

\section{Background}

The \defterm{chromatic symmetric function} (or \defterm{CSF}) of a (finite, simple, undirected) graph $G=(V,E)$ is
\[
\CSF_G=\sum_{\kappa\in\Col(G)} \prod_{v\in V} x_{\kappa(v)}
\]
where $\Col(G)$ is the set of proper colorings of $G$, taking values in the positive integers~$\Pp$, and $x_1,x_2,\dots$ are commuting indeterminates.  The CSF was introduced in the context of knot theory by Chmutov, Duzhin and Lando~\cite{CDL} and in combinatorics by Stanley~\cite{S}; another important early paper connecting the two points of view is Noble and Welsh \cite{NW}. Stanley posed the problem of whether the CSF distinguishes trees up to isomorphism.  This problem remains open and is considered very difficult.  One approach is to study what other graph invariants can be recovered from the CSF of a tree \cite{MMW,CL,CL2,WYZ,AJMWZ,LT}.  In particular, Crew \cite{CL,CL2} conjectured that the CSF of a tree determines its \defterm{generalized degree polynomial (GDP)}, which is defined as
\[\GDP_G=\sum_{A\subseteq V}x^{|A|}y^{\ext(A)}z^{\interior(A)}\]
where $\ext(A)$ (resp., $\interior(A)$) is the number of edges of $E$ with one endpoint (resp., two endpoints) in $A$.  Crew's conjecture was proven by
Aliste-Prieto et al.~\cite[Thm.~6]{AJMWZ}, who gave an explicit linear transformation mapping the CSF to the GDP, and independently by Liu and Tang \cite[Prop.~2.4]{LT}, using Hopf algebra methods.

The CSF may be generalized to weighted graphs.  Let $G=(V,E)$ be a graph equipped with a weight function $\wt:V\to\Pp$.  The \defterm{weighted chromatic symmetric function} (or \defterm{wCSF}) of $G$ is
\[
\CSF_G=\sum_{\kappa\in\Col(G)} \prod_{v\in V} x_{\kappa(v)}^{\wt(v)}.
\]
This invariant was introduced by Crew and Spirkl \cite{CS} (although the idea of chromatic invariants of weighted graphs can be traced back to \cite{NW}), who showed that it admits a deletion/contraction recurrence, unlike the unweighted version.  For this reason, the wCSF has proven useful in attacking Stanley's tree uniqueness problem; see, e.g., \cite{APdMOZ}.

Aliste-Prieto asked the authors whether an analogue of Crew's conjecture holds for weighted graphs.  The most elementary way to adapt the GDP to the weighted setting is to replace $x^{|A|}$ in the definition by $x^{\wt(A)}$, where $\wt(A)=\sum_{v\in A}\wt(v)$.  In fact, this polynomial is \textit{not} determined by the wCSF.  We give a counterexample below in Figure~\ref{fig:weighted-Crew-fails} by adapting a construction of Loebl and Sereni \cite{LS}.

The proof of Crew's conjecture in \cite{AJMWZ} can \textit{almost} be adapted to the weighted setting, but the topological information provided by counting vertices appears to be indispensable.  This observation suggests expanding the definitions of both the CSF and GDP to keep track of both number of vertices and weight data.  Accordingly, we define a new coloring enumerator, the \defterm{chromatic MacMahon symmetric function}, by
\[
\CMF_G=\sum_{\kappa\in\Col(G)} \prod_{v\in V} x_{\kappa(v)} y_{\kappa(v)}^{\wt(v)}
\]
in two alphabets of commuting variables.  This power series is a \defterm{MacMahon symmetric function}: it is invariant under the \textit{diagonal} action of the symmetric group, acting simultaneously on each alphabet.  Power series of this form were introduced (under the name ``symmetric functions of several systems of quantities'') by MacMahon \cite[Sec.~XI]{MacMahon}, and studied more recently in \cite{R,RRS}.  
Similarly, we define the \defterm{extended generalized degree polynomial} (or \defterm{EGDP}) of a weighted graph $(G,\wt)$ as the polynomial
\[\EGDP_{G} = \EGDP_{G}(w,x,y,z) = \sum_{A\subseteq V}w^{\ext(A)}x^{|A|}y^{\wt(A)}z^{\interior(A)}.\]

We can now state the main result of the article, which generalizes Crew's conjecture.

\begin{restatable}{theorem}{MacMahonCrew}\label{thm:MacMahonCrew}
Let $(F,\wt)$ be a weighted forest.  Then $\EGDP_{F}$ is determined by $\CMF_F$.  
\end{restatable}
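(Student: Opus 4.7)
The plan is to mirror the Aliste-Prieto--Martin--Wagner--Zamora strategy in the MacMahon setting in three steps: expand $\CMF_F$ in a MacMahon power-sum basis, read off a refined spanning-subforest enumerator from the expansion, and specialize that enumerator to recover $\EGDP_F$.

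First, I would introduce the MacMahon power sums $p_{(k,\ell)} := \sum_{i \ge 1} x_i^k y_i^\ell$ for $(k,\ell) \in \Pp \times \Pp$, and for a multiset $\pi$ of such pairs set $p_\pi := \prod_{(k,\ell) \in \pi} p_{(k,\ell)}$. Edge-by-edge inclusion--exclusion, in exact parallel to Stanley's derivation for the CSF, gives
\[
\CMF_F \;=\; \sum_{S \subseteq E(F)} (-1)^{|S|}\, p_{\pi(S)}, \qquad \pi(S) := \{(|C|, \wt(C)) : C \in \mathrm{comp}(V,S)\}.
\]
Because $F$ is a forest, $|S| = |V| - |\pi(S)|$, so the sign in each term is a function of $\pi(S)$ alone. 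Granting the standard MacMahon-theoretic fact that $\{p_\pi\}$ is a $\Qq$-basis of the two-alphabet MacMahon symmetric functions, $\CMF_F$ then determines $N(\pi) := \#\{S : \pi(S) = \pi\}$ for every $\pi$, and hence the refined subforest enumerator
\[
H_F(t) \;:=\; \sum_{S \subseteq E(F)} \; \prod_{C \in \mathrm{comp}(V,S)} t_{|C|,\wt(C)}.
\]
The component count $c(F)$ is likewise recoverable from $\CMF_F$, for instance as the minimum of $|\pi|$ over $\pi$ with $N(\pi) > 0$.

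Next, I would relate $H_F$ to $\EGDP_F$ via the natural pairing $A \mapsto (S(A), \tau_A)$, where $S(A) := \{e \in E : e \text{ does not cross } A\}$ and $\tau_A : \mathrm{comp}(V, S(A)) \to \{0,1\}$ flags the components lying in $A$. Under this pairing, $\ext(A) = |E| - |S(A)|$, while $|A|$, $\wt(A)$, and $\interior(A)$ are each additive over components $C$ with $\tau_A(C) = 1$ (with $\interior$-contribution $|C|-1$, since each such component of a spanning subforest of $F$ is a tree). Grouping the sum defining $\EGDP_F$ by $(S(A), \tau_A)$ and using the forest identity $|E| - |S| = |\mathrm{comp}(V,S)| - c(F)$ to absorb the $\ext$-weight into each component should yield $\EGDP_F$ as an explicit specialization of $H_F$ in the variables $w, x, y, z$, up to the global factor $w^{-c(F)}$.

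The main obstacle lies in this third step: the pairing is injective but not surjective, its image being exactly the pairs $(S, \tau)$ with $\tau$ a proper $2$-coloring of the contraction $F/S$. Consequently the naive specialization $t_{k,\ell} = w(1 + x^k y^\ell z^{k-1})$ overcounts, and one must correct by an inclusion--exclusion (or Möbius inversion) over spanning subforests to enforce the proper-coloring constraint. The forest hypothesis should make this tractable --- contractions of forests are again forests, hence bipartite with a well-controlled structure of $2$-colorings --- but pinning down the exact specialization is where the real work lives. As a backup, the Hopf-algebraic approach of Liu--Tang may adapt more transparently, since $\CMF$ should inherit a Hopf structure under which $\EGDP_F$ is a natural specialization.
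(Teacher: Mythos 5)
Your first two steps are correct and coincide with the paper's own setup: the expansion $\CMF_F=\sum_{S\subseteq E(F)}(-1)^{|S|}p_{\bitype(S)}$ (\Cref{power-CMF}), the absence of cancellation over a forest (\Cref{power-CMF-forest}), and hence the recovery from $\CMF_F$ of every count $N(\pi)=\beta_{\bLambda}(F)$ -- equivalently your enumerator $H_F$ -- together with $n(F)$, $c(F)$, and $e(F)=n(F)-c(F)$. The genuine gap is your third step, which is where the theorem actually lives: you stop at ``pinning down the exact specialization is where the real work lives,'' so the passage from $H_F$ to $\EGDP_F$ is asserted as a plan but never carried out, and the fallback appeal to a Liu--Tang-style Hopf argument is likewise undeveloped. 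As written, the proposal does not establish that $\EGDP_F$ is determined by $\CMF_F$.

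Moreover, the obstacle you identify is an artifact of how you set up the pairing, and the correction you propose (inclusion--exclusion over spanning subforests to enforce that $\tau$ is a proper $2$-coloring of $F/S$) is not what is needed. Rather than pairing each $A\subseteq V$ with the single maximal non-crossing set $S(A)=E(A)\cup E(\overline{A})$ and worrying about non-surjectivity, sum over \emph{all} $S\subseteq E(A)\cup E(\overline{A})$: the fiber of the map $(S,\tau)\mapsto A$ over a fixed $A$ consists of exactly these $S$ (with $\tau$ then forced), so the specialization $t_{k,\ell}\mapsto u_1^{k-1}x^k y^{\ell}+u_2^{k-1}$ of $H_F$ equals $\sum_{A\subseteq V} x^{|A|}y^{\wt(A)}(1+u_1)^{\interior(A)}(1+u_2)^{\interior(\overline{A})}$. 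Since $\interior(A)+\interior(\overline{A})=e(F)-\ext(A)$ and $e(F)$ is already recoverable, setting $1+u_1=z/w$ and $1+u_2=1/w$ and multiplying by $w^{e(F)}$ yields $\EGDP_F$; no properness constraint ever has to be imposed, because each component of $(V,S)$ is simply assigned to $A$ or to $\overline{A}$. This is precisely what the paper's first proof implements as the convolution $\varphi_{wx,w^{-1}z,y}*\varphi_{w,w^{-1},1}$ via \Cref{cor:convolution}, and what its second proof implements through the explicit inversion formula and the sifting identity~\eqref{useful-lemma}; your sketch would need one of these (or an equivalent computation) to become a proof.
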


The paper is structured as follows.  Section~\ref{sec:basics} sets up basic definitions and tools for (weighted) graphs and chromatic symmetric functions.  Section~\ref{sec:MacMahon} concerns MacMahon symmetric functions, including the chromatic MacMahon symmetric function of a graph, its expansion
in the MacMahon power-sum basis, and the Hopf algebra structure of MacMahon symmetric functions.
Section~\ref{sec:ProofMain} contains two proofs of Theorem~\ref{thm:MacMahonCrew}, by adapting each of the arguments of \cite{LT} and \cite{AJMWZ} to the weighted setting.  Finally, in Section~\ref{sec:additional}, we observe that the theory of chromatic bases of symmetric functions~\cite{CvW} carries over well to MacMahon symmetric functions, and discuss an easy generalization of the theory to $\Pp^r$-weighted graphs.

We thank Jos\'e Aliste-Prieto for suggesting this line of research, and Ira Gessel and Mercedes Rosas for helpful references on MacMahon symmetric functions.

\section{Basic definitions and notation}\label{sec:basics}

The symbols $\PP$ and $\NN$ denote the positive integers and the nonnegative integers, respectively.  We write $[n]$ for the set $\{1,2,\dots,n\}$.
We assume familiarity with standard notions of graph theory; see, e.g., \cite{Diestel}.  All graphs are assumed to be finite, simple, and undirected.

We either write a graph as an ordered pair $G=(V,E)$, or use the notation $V(G)$ and $E(G)$ for its vertex and edge sets, as convenient.
The symbols $n(G),e(G),c(G)$ denote respectively the numbers of vertices, edges, and connected components of~$G$.
For $A\subseteq V(G)$, we write $E(A)$ for the set of edges of $G$ with both endpoints in~$A$.  The subgraph induced by $A$ is $\restr{G}{A}=(A,E(A))$.

A \defterm{weighted graph} $(G,\wt)=(V,E,\wt)$ is a graph $G=(V,E)$ together with a function $\wt\colon V\to\PP$.  The \defterm{total weight} of $G$ is $\wt(G)=\sum_{v_i\in V}\wt(v_i)$.

\begin{definition} \label{defn:typeG}
The \defterm{type} of $G$ is the partition $\type(G)\partn n(G)$ whose parts are the numbers of vertices of its connected components. 
For an edge set $S\subseteq E$, we define $\type(S)=\type(V,S)$.
Similarly, the \defterm{weighted type} of a weighted graph $(G,\wt)$ is the partition $\wtype(G)\partn\wt(G)$ whose parts are the total weights of its connected components, and for $S\subseteq E$, we define $\wtype(S)=\wtype(V,S,\wt)$.
\end{definition}

A \defterm{coloring} of a graph is a function $\kappa\colon G\to\PP$.  A coloring $\kappa$ is \defterm{proper} if $\kappa(v)\neq \kappa(w)$ whenever $vw\in E(G)$.  The set of all proper colorings of $G$ is denoted by $\Col(G)$.

\begin{definition} \label{defn:CSF} \cite{S}
The \defterm{chromatic symmetric function} (or \defterm{CSF}) of a graph $G$ is the power series
\[\CSF_G=\sum_{\kappa\in\Col(G)} \prod_{v\in V} x_{\kappa(v)}.\]
\end{definition}

The CSF is a symmetric function in commuting variables $x_1,x_2,\dots$.  A standard reference on symmetric functions is \cite[Chap.~7]{EC2}.
The power-sum expansion of the CSF is
\begin{equation}\label{csfexp}
\CSF_G=\sum_{S\subseteq E(G)} (-1)^{|S|} p_{\type(S)}
\end{equation}
\cite[Thm. 2.5]{S}.  When $G=F$ is a forest (and not otherwise), there is no cancellation in this expression, and the formula may be rewritten as
\begin{equation}\label{csfexp-tree}
\CSF_F=\sum_{\lambda\vdash n} \beta_{\lambda}(F)(-1)^{n-\ell(\lambda)} p_{\lambda}
\end{equation}
where $\beta_{\lambda}(F)=|\{S\subseteq E(F)\colon \type(S)=\lambda\}|$ \cite[Cor.~2.8]{S}.

\begin{definition} \label{defn:GDP} \cite[Sec.~4.3]{CL}
Let $G$ be a graph and $A\subseteq V(G)$.  Say that an edge is \defterm{internal} to $A$ if it has both endpoints in $A$, and \defterm{external} to $A$ if it has exactly one endpoint in $A$.  The numbers of internal and external edges are denoted by $\interior(A)$ and $\ext(A)$ respectively.  The \defterm{generalized degree polynomial} (or \defterm{GDP}) of $G$ is the polynomial
\[\GDP_G=\GDP_G(x,y,z)\sum_{A\subseteq V(G)}x^{|A|}y^{\ext(A)}z^{\interior(A)}.\]
\end{definition}

These invariants naturally generalize to weighted graphs, as studied in \cite{CS,CL}.

\begin{definition} \label{defn:wCSF} \cite{CS}
The \defterm{weighted chromatic symmetric function} (or \defterm{wCSF}) of a weighted graph $(G,\wt)$ is the power series
\[\CSF_{G,\wt}=\sum_{\kappa\in\Col(G)} \prod_{v\in V} x_{\kappa(v)}^{\wt(v)}.\]
\end{definition}
The wCSF is equivalent to the \defterm{$W$-polynomial} introduced in \cite{NW}; see also \cite{LS}.

Stanley's proof of~\eqref{csfexp} carries over easily to the weighted setting, so that~\eqref{csfexp} and~\eqref{csfexp-tree} have the analogues
\begin{align}
\CSF_{G,\wt} &= \sum_{S\subseteq E}(-1)^{|S|} p_{\wtype(S)},\\
\intertext{or, when $F$ is a forest,}
\CSF_{F,\wt} &= \sum_{\lambda\vdash n} \beta_{\lambda,\wt}(F)(-1)^{n-\ell(\lambda)} p_{\lambda},
\end{align}
where $\beta_{\lambda,\wt}(F)=|\{S\subseteq E(F)\colon \wtype(S)=\lambda\}|$.

\begin{definition} \label{defn:wGDP}
The \defterm{weighted generalized degree polynomial} (or \defterm{wGDP}) of a weighted graph $(G,\wt)$ is the polynomial
\[
\GDP_{G,\wt}=\GDP_{G,\wt}(x,y,z)=\sum_{A\subseteq V(G)}x^{\wt(A)}y^{\ext(A)}z^{\interior(A)}=\sum_{a,b,c}g_T(a,b,c)x^ay^bz^c.
\]
where $g_T(a,b,c)=|\{A\subseteq V\colon\ \wt(A)=a,\ \ext(A)=b,\ \interior(A)=c\}|$.
\end{definition}

Crew~\cite{CL2}  conjectured that the CSF of a tree determines its GDP.  This conjecture was proven by Aliste-Prieto, Martin, Wagner and Zamora \cite[Thm.~6]{AJMWZ} and independently by Liu and Tang \cite[Prop. 2.4]{LT} using different methods.  On the other hand, the weighted analogue of Crew's conjecture is false.  Consider the two weighted 5-vertex paths $T_1,T_2$ shown in \Cref{fig:weighted-Crew-fails}, where the numbers indicate weights.

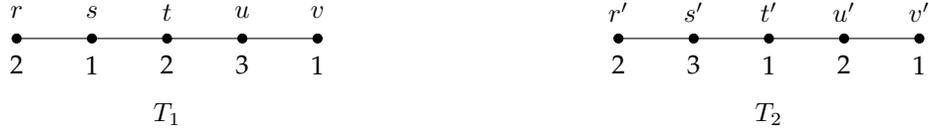
\begin{figure}[ht]
\begin{center}
\begin{tikzpicture}
\draw (0,0)--(4,0);
\foreach \x/\l/\lu in {0/2/r, 1/1/s, 2/2/t, 3/3/u, 4/1/v} { \draw[fill=black] (\x,0) circle (.06); \node at (\x,-.35) {\l}; \node at (\x,.35) {$\lu$}; }
\node at (2,-1) {$T_1$};
\begin{scope}[shift={(8,0)}]
\draw (0,0)--(4,0);
\foreach \x/\l\lu in {0/2/r', 1/3/s', 2/1/t', 3/2/u', 4/1/v'} { \draw[fill=black] (\x,0) circle (.06); \node at (\x,-.35) {\l}; \node at (\x,.35) {$\lu$};}
\node at (2,-1) {$T_2$};
\end{scope}
\end{tikzpicture}
\end{center}
\caption{Two weighted trees with the same wCSF but different wGDPs.}
\label{fig:weighted-Crew-fails}
\end{figure}

It was observed in \cite[p.~5]{LS}, $T_1$ and $T_2$ have the same $W$-polynomial, hence the same wCSF.  On the other hand, let us count the vertex sets whose total weight, external edge count, and internal edge count are 4,3,0 respectively.  $T_1$ has exactly one such set, namely $\{r,t\}$, and $T_2$ has two such sets, namely $\{r',u'\}$ and $\{s',v'\}$.  Therefore, the coefficients of $x^4y^3z^0$ are different in $\GDP_{T_1}$ and $\GDP_{T_2}$.

\section{MacMahon symmetric functions} \label{sec:MacMahon}

Let $m\in\NN$ and let $\AA=\{x_{j,k}\colon j\in\NN,\ k\in[m]\}$ be a family of commuting indeterminates.  For each $k\in[m]$, the subset $\AA_k=\{x_{j,k}\colon j\in\NN\}$ is the $k$th \defterm{alphabet}.
Let $\CC[[\AA]]$ denote the ring of formal power series in the variables $x_{j,k}$, with coefficients in $\CC$ (here and later one could replace $\CC$ with any field of characteristic~0).  This ring is $\Nn^m$-multigraded: for each $\mathbf{u}=(u_1,\dots,u_m)\in\Nn^m$, the $\mathbf{u}$-multigraded piece is spanned by monomials whose total degree in $\AA_k$ is $u_k$ for each $k\in[m]$.
Let $\Sym_\infty$ denote the group of permutations of $\NN$.

\begin{definition}\label{defn:MacMahon}
The \defterm{diagonal action} of $\Sym_\infty$ on $\AA$ is defined by
\[\sigma(x_{j,k})=x_{\sigma(j),k}.\]
The \defterm{MacMahon symmetric functions} are the invariants of the diagonal action.  They form a $\Nn^m$-multigraded subalgebra of $\Cc[[\AA]]$ denoted by $\Mac^m$.
\end{definition}

The ring $\Mac^1$ is just the familiar ring $\Lambda$ of symmetric functions.  For general $m$, the ring $\Mac^m$ has analogues of the monomial, power-sum, elementary, and complete homogeneous bases of $\Lambda$, as described in~\cite{R}, all of which are indexed by objects called \defterm{vector partitions}, which generalize integer partitions.

\begin{definition} \label{defn:vector-partition}
Let $\mathbf{u}=(u_1,\dots,u_m)\in\Pp^m$.
A \defterm{vector partition} of $\mathbf{u}$ is an unordered list $\bLambda=(\blambda^{(1)},\dots,\blambda^{(\ell)})$ of vectors $\blambda^{(i)}\in\Nn^m\sm\{(0,0\dots,0)\}$ such that $\blambda^{(1)}+\cdots+\blambda^{(\ell)}=\mathbf{u}$.  
For short, we write $\bLambda\vdash \mathbf{u}$.
The vectors $\blambda^{(i)}$ are the \defterm{parts} of $\bLambda$.  The number of parts is called its \defterm{length}, written $\ell(\bLambda)$, and the number $m$ is its \defterm{width}.
Note that a vector partition of width~1 is simply an integer partition.
\end{definition}

\begin{definition} \label{defn:powersum-MacMahon}
Let $\blambda=(\lambda_1,\dots,\lambda_m)\in\Nn^m$. The corresponding \defterm{power-sum MacMahon symmetric function} is 
\[p_{\blambda}=\sum_{j=1}^{\infty}\prod_{k=1}^{m} (x_{j,k})^{\lambda_{k}}.\]
For a vector partition $\bLambda=(\blambda^{(1)},\dots,\blambda^{(\ell)})$ of width~$m$, we define the power-sum MacMahon symmetric function $p_{\bLambda}$ by
\[p_{\bLambda} = p_{\blambda^{(1)}}\cdots p_{\blambda^{(\ell)}}.\]
For each $\mathbf{u}\in\Nn^m$, the set $\{p_{\bLambda}\mid \bLambda\partn\mathbf{u}\}$ is a vector space basis for the $\mathbf{u}$-multigraded piece of $\Mac^m$.
\end{definition}

Henceforth, we focus on the ring $\Mac^2$.  We simplify notation by setting $x_j=x_{j,1}$ and $y_j=x_{j,2}$.

\begin{example}
The power-sum basis for the graded piece of $\Mac^2$ with multidegree $\mathbf{u}=(2,1)$ consists of the following elements:
\begin{align*}
p_{((2,1))}  &= x_1^2y_1+x_2^2y_2+x_3^2y_3+\cdots\\
p_{((2,0),(0,1))}  &= (x_1^2+x_2^2+x_3^2+\cdots)(y_1+y_2+y_3+\cdots)\\
p_{((1,1),(1,0))}  &= (x_1y_1+x_2y_2+x_3y_3+\cdots)(x_1+x_2+x_3+\cdots)\\
p_{((1,0),(1,0),(0,1))}  &= (x_1+x_2+x_3+\cdots)^2(y_1+y_2+y_3+\cdots)
\end{align*}
\end{example}

\begin{definition} \label{defn:bitype}
Let $(G,\wt)$ be a weighted graph with connected components $C_1,\dots,C_k$, and let $\mathbf{u}=(n(G),\wt(G))$.  The \defterm{bitype} of $G$ is the vector partition
\[\bitype(G)=\big( (n(C_1),\wt(C_1)), \; \dots, \; (n(C_k),\wt(C_k)) \big) \partn \mathbf{u}.\]
For $S\subseteq E$, we set $\bitype(S)=\bitype(V,S,\wt)$. Similarly, if $A\subseteq V$, we set $\bitype(A)=\bitype(A,E(A),\restr{\wt}{A})$, where $E(A)$ is the set of edges internal to $A$.
\end{definition}

Not every vector partition can occur as a bitype of a graph.  Specifically, each part must be a vector in $\Pp^2$ (not merely in $\Nn^2\sm\{(0,0)\}$).

\begin{definition} \label{defn:CMF}
The \defterm{chromatic MacMahon symmetric function} (or \defterm{CMF}) of a weighted graph $G=(V,E,\wt)$ is the power series
\[\CMF_G=\sum_{\kappa\in\Col(G)} \prod_{v\in V} x_{\kappa(v)} y_{\kappa(v)}^{\wt(v)}.\]
\end{definition}

As a note, this chromatic MacMahon symmetric function is unrelated to the one defined by Rosas~\cite[Defn.~9]{R}.

\begin{remark}\label{remark:determine}
The chromatic MacMahon symmetric function of a weighted graph~$G$ determines $\CSF_{G,\wt}$ by setting $x_i=1$ for every $i$, and determines $\CSF_{G}$ by setting $y_i=1$ for every $i$. On the other hand, $\CSF_{G,\wt}$ and $\CSF_G$ do not together determine $\CMF_G$.  For instance, let $T_1,T_2$ be the two weighted trees shown in \Cref{fig:weighted-Crew-fails}, which have the same CSF and weighted CSF.  On the other hand, if we set $x_i=y_i=0$ for all $i\geq 3$ (i.e., we consider only colorings $\kappa\colon V\to\{1,2\}$), then the chromatic MacMahon symmetric functions of $T_1$ and $T_2$ become respectively
\[x_1^3y_1^5x_2^2y_2^4+x_1^2y_1^4x_2^3y_2^5 \quad \text{and} \quad x_1^3y_1^4x_2^2y_2^5+x_1^2y_1^5x_2^3y_2^4,\]
so $\CMF_{T_1}\neq\CMF_{T_2}$.
\end{remark}

The chromatic MacMahon symmetric function admits a power-sum expansion analogous to~\eqref{csfexp}.  The proof of that result in \cite{S} carries over to the setting of MacMahon symmetric functions, as we now show.

\begin{proposition} \label{power-CMF}
Let $(G,\wt)$ be a weighted graph. Then
\[\CMF_G=\sum_{S\subseteq E(G)}(-1)^{|S|}p_{\bitype(S)}.\]
\end{proposition}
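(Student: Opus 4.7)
The plan is to mimic Stanley's original proof of the power-sum expansion \eqref{csfexp}, adapted to track weight data in the second alphabet. The argument is a straightforward inclusion-exclusion over monochromatic edge sets.

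First I would drop the properness condition and rewrite $\CMF_G$ as an inclusion-exclusion over improper colorings. For each edge $e=uv\in E(G)$, let $M_e$ denote the set of colorings $\kappa:V\to\Pp$ with $\kappa(u)=\kappa(v)$. A coloring is proper exactly when it lies in none of the $M_e$. Applying inclusion-exclusion to the monomial $\prod_v x_{\kappa(v)} y_{\kappa(v)}^{\wt(v)}$ gives
\[
\CMF_G \;=\; \sum_{S\subseteq E(G)} (-1)^{|S|} \sum_{\substack{\kappa:V\to\Pp\\ \kappa\text{ constant on }\\ \text{components of }(V,S)}} \prod_{v\in V} x_{\kappa(v)} y_{\kappa(v)}^{\wt(v)}.
\]

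Second, I would evaluate the inner sum. Fix $S\subseteq E(G)$ and let $C_1,\dots,C_k$ be the connected components of the spanning subgraph $(V,S)$. A coloring that is constant on each $C_i$ is determined by a choice of color $j_i\in\Pp$ for each component. For a component $C_i$ colored $j_i$, its monomial contribution is
\[
\prod_{v\in C_i} x_{j_i} y_{j_i}^{\wt(v)} \;=\; x_{j_i}^{|C_i|} \, y_{j_i}^{\wt(C_i)}.
\]
Summing over the choices $j_1,\dots,j_k\in\Pp$ independently factors the inner sum as
\[
\prod_{i=1}^k \sum_{j\in\Pp} x_j^{|C_i|}\,y_j^{\wt(C_i)} \;=\; \prod_{i=1}^k p_{(|C_i|,\wt(C_i))} \;=\; p_{\bitype(S)},
\]
using Definition~\ref{defn:powersum-MacMahon} and the fact that the parts of $\bitype(S)$ are precisely the pairs $(|C_i|,\wt(C_i))$.

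Combining these two steps yields the claimed identity. There is essentially no obstacle here: the only thing to verify is that the $\Nn^2$-multigrading and the two-alphabet indexing interact cleanly with the inclusion-exclusion, which they do because the diagonal action leaves each factor $x_{j_i}^{|C_i|}y_{j_i}^{\wt(C_i)}$ well-defined. The only potential subtlety is convergence/formal-power-series well-definedness of $\CMF_G$, but since $G$ is finite each multigraded piece of $\CMF_G$ is a finite sum, so no issue arises. This is the MacMahon analogue promised before the statement.
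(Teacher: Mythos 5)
Your proof is correct and follows essentially the same route as the paper's: Stanley's inclusion--exclusion over monochromatic edge sets, together with the observation that the colorings constant on the components of $(V,S)$ contribute exactly $p_{\bitype(S)}$, with the weight exponents $\wt(C_i)$ appearing in the second alphabet. The paper merely presents the same two steps in the opposite order (first identifying $p_{\bitype(S)}$ with the sum over colorings in $K_S(G)$, then interchanging the sums over $S$ and $\kappa$ and using $\sum_{S\subseteq E_\kappa(G)}(-1)^{|S|}=0$ unless $E_\kappa(G)=\emptyset$), which is the same inclusion--exclusion you invoke.
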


\begin{proof}
Fix $S\subseteq E$, and suppose that $G|_S=(V,S,\wt)$ has connected components $C_1,\dots,C_\ell$, of sizes $n_1,\dots,n_\ell$ and weights $w_1,\dots,w_\ell$.  Then $\bitype(S)=(n_1,w_1)\cdots(n_{\ell},w_{\ell})$ and
\begin{align*}
p_{\bitype(S)} &= \prod_{i=1}^\ell p_{(n_i,w_i)} \\
&= \sum_{(k_1,\dots,k_\ell)\in\mathbb{N}^\ell} x_{k_1}^{n_1}y_{k_1}^{w_1}\cdots x_{k_{\ell}}^{n_{\ell}}y_{k_{\ell}}^{w_{\ell}}\\
&= \sum_{(k_1,\dots,k_\ell)\in\mathbb{N}^\ell} x_{k_1}^{|C_1|}y_{k_1}^{\sum_{v\in C_1}\wt(v)}\cdots x_{k_{\ell}}^{|C_{\ell}|}y_{k_{\ell}}^{\sum_{v\in C_{\ell}}\wt(v)}\\
&= \sum_{\kappa\in K_S(G)} \prod_{v\in V} x_{\kappa(v)}y_{\kappa(v)}^{\wt(v)}
\end{align*}
where $K_S(G)$ is the set of all colorings $\kappa\colon V\to\NN$ that are monochromatic on every $C_i$ (specifically, assigning color $k_i$ to all vertices of $C_i$).
Multiplying by $(-1)^{|S|}$ and summing over all $S$, we obtain
\begin{align*}
\sum_{S\subseteq E(G)}(-1)^{|S|}p_{\bitype(S)}
&= \sum_{S\subseteq E(G)}(-1)^{|S|}\sum_{\kappa\in K_S(G)} \prod_{v\in V} x_{\kappa(v)}y_{\kappa(v)}^{\wt(v)}\\
&= \sum_{\kappa:V\to\NN} \prod_{v\in V} x_{\kappa(v)}y_{\kappa(v)}^{\wt(v)} \left(\sum_{S\subseteq E_\kappa(G)}(-1)^{|S|}\right)
\end{align*}
where $E_\kappa(G)=\{uv\in E(G)\mid \kappa(u)=\kappa(v)\}$.  The parenthesized sum is 1 if $E_\kappa(G)=\0$ (i.e., when $\kappa$ is a proper coloring) and 0 otherwise.  Therefore,
\[
\CMF_G=\sum_{\kappa\in\Col(G)} \prod_{v\in V} x_{\kappa(v)}y_{\kappa(v)}^{\wt(v)}=\sum_{S\subseteq E(G)}(-1)^{|S|}p_{\bitype(S)}.\qedhere
\]
\end{proof}

When $F$ is a forest, the following analogue of~\eqref{csfexp-tree} follows from grouping the terms in \Cref{power-CMF} by the vector partition $\bitype(S)$ and observing that $|S|=n-\ell(\bitype(S))$, so no cancellation occurs.

\begin{corollary} \label{power-CMF-forest}
Let $(F,\wt)$ be a weighted forest with $n=n(F)$ and $w=\wt(F)$.  Then
\[\CMF_F=\sum_{\bLambda\vdash(n,w)}\beta_{\bLambda}(F)(-1)^{n-\ell(\bLambda)}p_{\bLambda}\]
where
\[\beta_{\bLambda}(F)=|\{S\subseteq E(F)\colon \bitype(S)=\bLambda\}|.\] 
\end{corollary}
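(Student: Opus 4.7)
The plan is to start from the expansion
\[\CMF_F = \sum_{S \subseteq E(F)} (-1)^{|S|} p_{\bitype(S)}\]
supplied by Proposition \ref{power-CMF}, and to reorganize the sum by collecting edge subsets according to their bitype.

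The crucial step, and the one that is specific to the forest case, is to show that $|S|$ is determined by $\bitype(S)$. Since $F$ is a forest, any subset $S \subseteq E(F)$ induces a subgraph $(V, S)$ that is again a forest on $n = n(F)$ vertices. A forest on $n$ vertices with $c$ connected components has exactly $n - c$ edges. By Definition \ref{defn:bitype}, the parts of $\bitype(S)$ are in bijection with the connected components of $(V, S, \wt)$, so $\ell(\bitype(S))$ equals the number of components, giving $|S| = n - \ell(\bitype(S))$. Hence the sign $(-1)^{|S|}$ depends only on $\bitype(S)$, not on $S$ itself.

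Substituting this into Proposition \ref{power-CMF} and grouping by $\bLambda$ then yields
\[\CMF_F = \sum_{\bLambda} \sum_{\substack{S \subseteq E(F) \\ \bitype(S) = \bLambda}} (-1)^{n-\ell(\bLambda)} p_{\bLambda} = \sum_{\bLambda} \beta_{\bLambda}(F)(-1)^{n-\ell(\bLambda)} p_{\bLambda}.\]
The sum may be restricted to $\bLambda \vdash (n, w)$, since for every $S \subseteq E(F)$ the parts of $\bitype(S)$ sum to $(n(F), \wt(F)) = (n, w)$.

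There is no real obstacle: the argument is essentially a bookkeeping reorganization once the edge-count identity $|S| = n - \ell(\bitype(S))$ is in hand. The content of the phrase ``no cancellation occurs'' is precisely that, for a general graph $G$, an edge subset $S$ containing cycles satisfies $|S| > n - \ell(\bitype(S))$, so subsets of different sizes can share a bitype and their signed contributions to the coefficient of $p_{\bLambda}$ can partially cancel. Acyclicity of $F$ rules this out, so that $\beta_{\bLambda}(F)$ is, up to the uniform sign $(-1)^{n-\ell(\bLambda)}$, the actual coefficient of $p_{\bLambda}$ in $\CMF_F$.
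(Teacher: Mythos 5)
Your proposal is correct and follows the same route as the paper: the paper derives the corollary from Proposition~\ref{power-CMF} by grouping edge subsets according to $\bitype(S)$ and using the identity $|S|=n-\ell(\bitype(S))$, valid because $F$ is a forest, exactly as you do. Your added remark explaining why acyclicity prevents cancellation is a fair elaboration of the paper's terser justification.
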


Observe that $\beta_{\bLambda}(F)=0$ if $\bLambda$ has any parts not in $\Pp^2$.  Moreover, for each subset $A\subseteq E(F)$, the subgraph $(V(F),A)$ has $n-|A|$ components, so its bitype has length $n-|A|$.  Therefore, we obtain the useful equation
\begin{equation} \label{sum-b}
\sum_{\substack{\bLambda\vdash (n(F),\wt(F))\\ \ell(\bLambda)=\ell}} \beta_{\bLambda}=\binom{e(F)}{n-\ell}.
\end{equation}

As observed by Rosas, Rota and Stein \cite{RRS}, the MacMahon symmetric functions admit a Hopf algebra structure. We very briefly sketch the definition of a combinatorial Hopf algebra; for more details, see, e.g., \cite{GR}.

\begin{definition}\cite[Chap.~1]{GR}\label{defn:bialgebra-Hopf}
A \defterm{bialgebra} is a vector space $A$ over a field (say $\Cc$) endowed with a \defterm{product} $m\colon  A\otimes A\to A$, a \defterm{coproduct} $\Delta\colon  A\to A\otimes A$, a \defterm{unit} $u\colon\Cc\to A$, and a \defterm{counit} $\varepsilon\to\Cc$, satisfying a variety of axioms, of which the three most important are \defterm{associativity}, \defterm{coassociativity}, and \defterm{compatibility}, given by the diagrams shown below.
\[\begin{array}{ccccc}
\begin{tikzcd}
A\otimes A\otimes A \arrow{r}{\mu} \arrow[swap]{d}{\mu} & A\otimes A \arrow{d}{\mu\otimes I} \\
A\otimes A \arrow{r}{I\otimes\mu}& A
\end{tikzcd}
&&
\begin{tikzcd}
A \arrow{r}{\Delta} \arrow[swap]{d}{\Delta} & A\otimes A \arrow{d}{\Delta\otimes I} \\
A\otimes A \arrow{r}{I\otimes\Delta}& A\otimes A\otimes A
\end{tikzcd}
&&
   \begin{tikzcd}
A\otimes A \arrow{r}{m} \arrow[swap]{d}{\Delta\otimes\Delta} & A \arrow{d}{\Delta} \\
A\otimes A\otimes A\otimes A\quad  \arrow{r}{m_{1,3}\otimes m_{2,4}}&\quad A\otimes A
\end{tikzcd}
\\
\text{associativity} && \text{coassociativity} && \text{compatibility}
\end{array}\]
where $m_{i,j}$ denotes multiplication of the $i$th and $j$th coordinates.

The bialgebra $A$ is \defterm{graded} if there is a vector space decomposition
$A=\bigoplus_{n\geq 0} A_n$ such that
\[\forall n,k\colon\ m(A_k\otimes A_{n-k})\subseteq A_n \quad\text{and}\quad \forall n\colon \Delta A_n\subseteq \bigoplus_{k\leq n}A_k\otimes A_{n-k}.\]
A graded bialgebra $A$ is \defterm{connected} if $\dim_\Cc A_0=1$.
Every graded connected bialgebra has a unique structure as a \defterm{Hopf algebra} \cite[Prop.~1.4.16]{GR}; that is, there is an \defterm{antipode map} $\anti\colon A\to A$, defined recursively as follows: $\anti$ is the identity on $A_0$, and, for all $x\in A_k$ with $k>0$, we have
\begin{equation}\label{antipode}
\sum S(x_1) x_2 = 0
\end{equation}
in Sweedler notation~\cite[p.8]{GR}.
\end{definition}

The ring $\Mac^m$ of MacMahon symmetric functions is evidently a graded connected $\Cc$-algebra.  In fact, we prove that it is a bialgebra.  For a vector partition $\bLambda=(\blambda^{(1)},\dots,\blambda^{(\ell)})$ and $J\subseteq[\ell]$, let $\restr{\bLambda}{J}$ be the vector partition with parts $\blambda^{(i)}$ for $i\in J$, and let $\bar J=[\ell]\sm J$.

\begin{proposition} \label{prop:MacMahon-coproduct}
The ring $\Mac^m$ is a graded connected Hopf algebra, with (i) product defined on the power sum basis by
\begin{equation}\label{product}
p_{\bOmega}\, p_{\bLambda} = p_{\bLambda\bOmega}
\end{equation}
where $\bLambda\bOmega$ is the vector partition obtained by concatenating $\bOmega$ with~$\bLambda$, and (ii) coproduct defined by
\begin{equation}\label{coproduct}
\Delta(p_{\bLambda})=\sum_{J\subseteq [\ell(\bLambda)]}p_{\restr{\bLambda}{J}}\otimes p_{\restr{\bLambda}{\overline{J}}}.
\end{equation}
\end{proposition}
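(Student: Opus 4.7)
The plan is to verify the product and coproduct formulas separately, check the bialgebra axioms, and then invoke graded-connectedness to obtain the antipode. The product identity \eqref{product} is essentially definitional: since \Cref{defn:powersum-MacMahon} sets $p_\bLambda=p_{\blambda^{(1)}}\cdots p_{\blambda^{(\ell)}}$, multiplying $p_\bOmega$ by $p_\bLambda$ merely concatenates their lists of parts. The real work is therefore the construction of $\Delta$ and its compatibility with the product and with itself.

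For the coproduct I would use the standard alphabet-splitting trick from the classical theory of $\Lambda$. Partition $\NN=\NN'\sqcup \NN''$ into two infinite subsets, and let $\AA'$, $\AA''$ be the corresponding sub-alphabets. The substitution $\AA\mapsto\AA'\sqcup\AA''$ induces an algebra map $\Mac^m\to\Mac^m\otimes\Mac^m$ under which each single-part power sum splits as
\[p_\blambda = \sum_{j\in\NN'}\prod_{k=1}^m x_{j,k}^{\lambda_k}+\sum_{j\in\NN''}\prod_{k=1}^m x_{j,k}^{\lambda_k} = p_\blambda\otimes 1 + 1\otimes p_\blambda,\]
i.e., is primitive.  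Because $\Delta$ is a ring map by construction, for $\bLambda=(\blambda^{(1)},\dots,\blambda^{(\ell)})$ I would distribute to obtain
\[\Delta(p_\bLambda)=\prod_{i=1}^{\ell}\bigl(p_{\blambda^{(i)}}\otimes 1+1\otimes p_{\blambda^{(i)}}\bigr)=\sum_{J\subseteq[\ell]}p_{\restr{\bLambda}{J}}\otimes p_{\restr{\bLambda}{\bar J}},\]
matching \eqref{coproduct}.

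To finish, coassociativity follows by splitting $\NN$ into three infinite pieces and observing that both $(\Delta\otimes I)\circ\Delta$ and $(I\otimes\Delta)\circ\Delta$ agree with the resulting three-way-splitting map, since disjoint union is associative. The counit $\varepsilon$ sends $p_\bLambda\mapsto \delta_{\bLambda,\emptyset}$, extended linearly, and the compatibility diagram is automatic since $\Delta$ was built as an algebra homomorphism. Each $p_{\restr{\bLambda}{J}}$ carries a sub-multidegree of $\bLambda$, so the $\NN^m$-grading is preserved, and the degree-$(0,\dots,0)$ piece is visibly $\CC$, making the bialgebra graded connected. An antipode then exists uniquely by \cite[Prop.~1.4.16]{GR}. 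The main subtlety I foresee is confirming that the splitting substitution genuinely lands in the subring $\Mac^m\otimes\Mac^m$ of invariants on the split alphabet, rather than in a strictly larger $\Sym_{\NN'}\times\Sym_{\NN''}$-invariant ring; this holds because the power sums generate $\Mac^m$ algebraically and each splits as a primitive, so the image lies in the subalgebra generated by elements of the form $f\otimes 1$ and $1\otimes f$.
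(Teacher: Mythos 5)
Your proposal is correct, but it takes a genuinely different route from the paper. The paper defines $\Delta$ directly on the power-sum basis by formula \eqref{coproduct} and then verifies coassociativity and compatibility by hand: coassociativity via the three-fold sum over ordered set partitions $J_1\sqcup J_2\sqcup J_3=[\ell(\bLambda)]$, and compatibility by expanding $(m_{1,3}\otimes m_{2,4})\circ(\Delta\otimes\Delta)$ on $p_{\bLambda}\otimes p_{\bOmega}$ and matching it with $\Delta(p_{\bLambda\bOmega})$ through the identification of pairs $(J,J')$ with subsets $K\subseteq[\ell(\bLambda\bOmega)]$. You instead construct $\Delta$ as the alphabet-splitting (variable-doubling) homomorphism, so that compatibility is automatic, the formula \eqref{coproduct} falls out of primitivity of the single-part power sums $p_{\blambda}$ by distributing the product $\prod_i(p_{\blambda^{(i)}}\otimes 1+1\otimes p_{\blambda^{(i)}})$, and coassociativity comes from a three-way split. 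Your approach is more structural and explains \emph{why} the formula holds, at the cost of the one subtlety you correctly flag: one must check the splitting map lands in $\Mac^m\otimes\Mac^m$ rather than merely in the $\Sym_{\NN'}\times\Sym_{\NN''}$-invariants, and your argument for this (the $p_{\blambda}$ generate $\Mac^m$ as an algebra, each mapping into the tensor-product subalgebra) is sound because the $p_{\bLambda}$ form a graded basis. The paper's computation is more elementary and entirely self-contained on the basis, avoiding any discussion of invariant rings of split alphabets; yours has the advantage of making the algebra-homomorphism property of $\Delta$ (and hence the bialgebra axiom) come for free and of running in visible parallel with the classical construction for $\Lambda$.
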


\begin{remark}
It was proven by Rosas, Rota and Stein~\cite{RRS} that $\Mac^m$ is a Hopf algebra.  The content of Proposition~\ref{prop:MacMahon-coproduct} is the coproduct formula in the power-sum basis, which to our knowledge does not appear explicitly in the literature.  It generalizes the well-known formula for power-sum symmetric functions (equation~\eqref{multiply-p-symfn}, below), which is just the case $m=1$, and it naturally resembles the formula for power-sum \textit{noncommutative} symmetric functions given explicitly by Lauve and Mastnak~\cite[eqn.~(3)]{LM}.  Indeed, it is possible to verify~\eqref{coproduct} by applying the projection map of Rosas~\cite[Defn.~1]{R} to the Lauve--Mastnak formula.  Instead, we give a self-contained proof without using noncommutative symmetric functions.
\end{remark}

\begin{proof}
The product formula is immediate from Definition~\ref{defn:powersum-MacMahon}, so $\Mac^m$ is a graded connected ring.  It remains to show that the coproduct~\eqref{coproduct} satisfies coassociativity and compatibility.
For coassociativity, it is routine to show that
\[
(I\otimes\Delta)(\Delta(p_{\bLambda}))
=\sum_{J_1\sqcup J_2\sqcup J_3=[\ell(\bLambda)]}p_{\restr{\bLambda}{J_1}}\otimes p_{\restr{\bLambda}{J_2}}\otimes p_{\restr{\bLambda}{J_3}}\\
=\Delta\otimes I\left(\sum_{J\subseteq [\ell(\bLambda)]}p_{\restr{\bLambda}{J}}\otimes p_{\restr{\bLambda}{\overline{J}}}\right)
\]
For compatibility, observe that
\begin{align*}
m_{1,3}\otimes m_{2,4}\left(\Delta\otimes\Delta\left(p_{\bLambda}\otimes p_{\bOmega}\right)\right)
&=\sum_{\substack{J\subseteq [\ell(\bLambda)]\\J'\subseteq[\ell(\bOmega)]}}(p_{\restr{\bLambda}{J}}\cdot p_{\restr{\bOmega}{J'}})\otimes (p_{\restr{\bLambda}{\overline{J}}}\cdot p_{\restr{\bOmega}{\overline{J'}}})\\
&=\sum_{K\subseteq [\ell(\bLambda\bOmega)]}p_{\restr{\bLambda\bOmega}{K}}\otimes p_{\restr{\bLambda\bOmega}{\overline{K}}}\\
&=\Delta(p_{\bLambda\bOmega}) = \Delta(p_{\bLambda}\cdot p_{\bOmega}) = \Delta(m(p_{\bLambda}\otimes p_{\bOmega})).\qedhere
\end{align*}
\end{proof}

The Hopf algebra $\Mac^m$ is evidently commutative and cocommutative.  The special case $\Mac^1$ is just the standard Hopf algebra of symmetric functions, with coproduct given on the power-sum symmetric functions by
\[\Delta(p_n)=1\otimes p_n +p_n\otimes 1\]
\cite[Prop.~2.3.6]{GR} and
\begin{equation} \label{multiply-p-symfn}
\Delta(p_{\lambda})=\sum_{J\subseteq[\ell(\lambda)]}p_{\restr{\lambda}{J}}\otimes p_{\restr{\lambda}{\overline{J}}}.
\end{equation}

When $\bLambda$ consists of a single vector $\blambda$, 
\Cref{prop:MacMahon-coproduct} specializes to $\Delta(p_{\blambda})=p_{\blambda}\otimes 1+1\otimes p_{\blambda}$, so (when $\blambda\neq0$) the antipode formula~\eqref{antipode} yields $\anti(p_{\blambda})=-p_{\blambda}$.  Therefore, the antipode acts on the basis $\{p_{\bLambda}\}$ by
\begin{equation}
\anti(p_{\bLambda})=(-1)^{\ell(\bLambda)}p_{\bLambda},
\end{equation}
generalizing the well-known result for symmetric functions \cite[Prop.~2.4.1(i)]{GR}

The coproduct of a chromatic MacMahon symmetric symmetric function also has a particularly simple form.

\begin{proposition} \label{prop:coprod-CMF}
Let $(G,\wt)$ be a weighted graph.  Then
\[\Delta(\CMF_G)=\sum_{A\subseteq V(G)}\CMF_{\restr{G}{A}}\otimes \CMF_{\restr{G}{\bar{A}}}.\]
\end{proposition}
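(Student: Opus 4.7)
The plan is to combine the power-sum expansion of $\CMF_G$ from \Cref{power-CMF} with the power-sum coproduct formula from \Cref{prop:MacMahon-coproduct}, and then reindex the resulting double sum bijectively in terms of vertex subsets $A\subseteq V$.

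Applying $\Delta$ term-by-term to $\CMF_G=\sum_{S\subseteq E(G)}(-1)^{|S|}p_{\bitype(S)}$ and expanding each $\Delta(p_{\bitype(S)})$ via \Cref{prop:MacMahon-coproduct} yields
\[\Delta(\CMF_G)=\sum_{S\subseteq E(G)}\sum_{J\subseteq[\ell(\bitype(S))]}(-1)^{|S|}\,p_{\restr{\bitype(S)}{J}}\otimes p_{\restr{\bitype(S)}{\overline{J}}}.\]
The main step is to reindex. Given $(S,J)$, let $C_1,\dots,C_\ell$ be the connected components of $(V,S)$ and set $A=\bigsqcup_{i\in J}V(C_i)$; then $S$ splits uniquely as $S_1\sqcup S_2$ with $S_1=S\cap E(A)\subseteq E(A)$ and $S_2=S\cap E(\bar A)\subseteq E(\bar A)$. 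Conversely, for any triple $(A,S_1,S_2)$ with $A\subseteq V$, $S_1\subseteq E(A)$, $S_2\subseteq E(\bar A)$, setting $S=S_1\cup S_2$ forces $A$ to be a union of components of $(V,S)$ because no edge of $S$ crosses between $A$ and $\bar A$, so the pair $(S,J)$ with $J=\{i:V(C_i)\subseteq A\}$ is recovered uniquely.

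Under this bijection, the components of the induced subgraph $(A,S_1)$ are exactly the $C_i$ with $i\in J$, with the same weights. Hence $\restr{\bitype(S)}{J}$ agrees with the bitype of $S_1$ computed inside $\restr{G}{A}$, and likewise for $\bar J$; also $(-1)^{|S|}=(-1)^{|S_1|}(-1)^{|S_2|}$. Substituting and factoring the sum gives
\[\Delta(\CMF_G)=\sum_{A\subseteq V}\left(\sum_{S_1\subseteq E(A)}(-1)^{|S_1|}p_{\bitype(S_1)}\right)\otimes\left(\sum_{S_2\subseteq E(\bar A)}(-1)^{|S_2|}p_{\bitype(S_2)}\right),\]
and a second application of \Cref{power-CMF}, now to $\restr{G}{A}$ and $\restr{G}{\bar A}$, identifies the inner sums as $\CMF_{\restr{G}{A}}$ and $\CMF_{\restr{G}{\bar A}}$ respectively.

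The step requiring real care, though ultimately routine, is the bijection between $(S,J)$-pairs and $(A,S_1,S_2)$-triples, together with the matching of the ``restricted'' sub-vector-partition $\restr{\bitype(S)}{J}$ with the bitype computed internally in the induced subgraph $\restr{G}{A}$; once one observes that the components of $S_1$ inside $\restr{G}{A}$ are exactly those $C_i$ with $i\in J$ (with unchanged vertex count and weight), the rest of the argument is a clean separation of variables in a tensor product.
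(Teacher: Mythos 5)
Your proposal is correct and follows essentially the same route as the paper's proof: expand $\CMF_G$ via \Cref{power-CMF}, apply the power-sum coproduct from \Cref{prop:MacMahon-coproduct}, reindex the $(S,J)$-sum as a sum over triples $(A,S_1,S_2)$ with $S_1\subseteq E(A)$ and $S_2\subseteq E(\bar A)$, and apply \Cref{power-CMF} again to the induced subgraphs. Your write-up is in fact slightly more explicit than the paper's about verifying that this reindexing is a bijection (in particular that $A$ must be a union of components of $(V,S)$), which is a welcome level of care but not a different argument.
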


\begin{proof}
For an edge set $S\subseteq E(G)$, let $K(S)$ denote the set of connected components of $(V,S)$.  For convenience, we identify each subset $J\subseteq K(S)$ with the disjoint union of its elements.  In particular, $V(\bar J)=V(G)\sm V(J)$ and $E(\bar J)=S\sm E(\bar J)$.
By \Cref{power-CMF} and the definition of coproduct,
\begin{align*}
\Delta(\CMF_G)&=\sum_{S\subseteq E(G)}(-1)^{|S|}\Delta(p_{\bitype(S)})\\
&=\sum_{S\subseteq E(G)}(-1)^{|S|}\sum_{J\subseteq K(S)}p_{\bitype(J)}\otimes p_{\bitype(\bar{J})}\\
&=\sum_{S\subseteq E(G)}\sum_{J\subseteq K(S)}(-1)^{|E(J)|}p_{\bitype(J)}\otimes (-1)^{|E(\bar{J})|}p_{\bitype(\bar{J})}\\
&=\sum_{A\subseteq V(G)}\sum_{S_1\subseteq E(A)}\sum_{S_2\subseteq E(\bar{A})}(-1)^{|S_1|}p_{\bitype(S_1)}\otimes (-1)^{|S_2|}p_{\bitype(S_2)}\\
&=\sum_{A\subseteq V(G)}\left(\sum_{S_1\subseteq E(A)}(-1)^{|S_1|}p_{\bitype(S_1)}\right)\otimes \left(\sum_{S_2\subseteq E(\bar{A})}(-1)^{|S_2|}p_{\bitype(S_2)}\right)\\
&=\sum_{A\subseteq V(G)}\CMF_{\restr{G}{A}}\otimes \CMF_{\restr{G}{\bar{A}}}.\qedhere
\end{align*}
\end{proof}

Given a Hopf algebra $A$ and two $\CC$-linear maps $f,g\colon A\to R$, where $R$ is a commutative $\CC$-algebra, the \defterm{convolution} of $f$ and $g$ is the function $f*g\colon A\to R$ given by
\[(f*g)(B)=\sum f(B_1)\cdot g(B_2)\]
where $\Delta(B)=\sum B_1\otimes B_2$ in Sweedler notation.  The following is an immediate consequence of \Cref{prop:coprod-CMF}.

\begin{corollary} \label{cor:convolution}
Let $f,g\colon\Mac^2\to R$ be linear maps and $(G,\wt)$ a weighted graph.  Then
\[(f*g)(\CMF_G)=\sum_{A\subseteq V(G)}f(\CMF_{\restr{G}{A}})\cdot g(\CMF_{\restr{G}{\bar{A}}}).\]
\end{corollary}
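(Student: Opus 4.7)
The plan is to unpack the definition of convolution and apply the coproduct formula established in Proposition~\ref{prop:coprod-CMF}. Since convolution is defined via the coproduct in Sweedler notation as $(f*g)(B)=\sum f(B_1)\cdot g(B_2)$, the entire content of the corollary is the observation that Proposition~\ref{prop:coprod-CMF} presents $\Delta(\CMF_G)$ already in an explicit (non-Sweedler) form indexed by subsets $A\subseteq V(G)$.

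Specifically, I would first write
\[\Delta(\CMF_G)=\sum_{A\subseteq V(G)}\CMF_{\restr{G}{A}}\otimes \CMF_{\restr{G}{\bar{A}}}\]
by invoking Proposition~\ref{prop:coprod-CMF}. Then, by linearity of $f$ and $g$ and the definition of $f*g$, the result follows by applying $f$ to the left tensor factor and $g$ to the right tensor factor in each summand, then multiplying in $R$. No additional combinatorial or algebraic argument is needed beyond this substitution.

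There is no real obstacle here: the corollary is a direct rewriting of Proposition~\ref{prop:coprod-CMF} through the definition of convolution. The only thing worth remarking on is that the enumeration over subsets $A\subseteq V(G)$ in the coproduct expansion is genuine (not Sweedler shorthand requiring further unpacking), so applying $f\otimes g$ term-by-term immediately yields the stated formula.
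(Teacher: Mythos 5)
Your proposal is correct and matches the paper exactly: the paper states the corollary as an immediate consequence of Proposition~\ref{prop:coprod-CMF}, obtained precisely by substituting its explicit coproduct expansion into the definition of convolution and applying $f$ and $g$ to the tensor factors by linearity.
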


\section{Proof of the main theorem}\label{sec:ProofMain}

\begin{definition} \label{defn:EGDP}
The \defterm{extended generalized degree polynomial} (or \defterm{EGDP}) of a weighted graph $(G,\wt)$ is the polynomial
\[\EGDP_{G}
= \EGDP_{G}(w,x,y,z)
= \sum_{A\subseteq V}w^{\ext(A)}x^{|A|}y^{\wt(A)}z^{\interior(A)}
= \sum_{a,b,c,d}g_T(a,b,c,d)w^ax^by^cz^d\]
where $g_T(a,b,c,d)=|\{A\subseteq V:\ \ext(A)=a,\ |A|=b,\ \wt(A)=c,\ \interior(A)=d\}|$.
\end{definition}

Our main theorem is that Crew's conjecture holds in the more general setting of chromatic MacMahon symmetric functions of weighted forests.

\MacMahonCrew*  

Crew's original conjecture is obtained as the special case when all weights are~1.

We give two proofs of this theorem.  The first is modeled on Liu and Tang's Hopf-theoretic proof of Crew's conjecture \cite[Prop.~2.4]{LT}.

\begin{proof}[First proof of Theorem~\ref{thm:MacMahonCrew}]
We construct a $\mathbb{Q}$-linear map $\gamma\colon M^2\to \mathbb{Q}(w,x,y,z)$ with the property that
\begin{equation} \label{desired-gamma}
\gamma(\CMF_F)=y^{c(F)}\EGDP_F(w,x,y,z).
\end{equation}

We first show how to recover the numbers $n=n(F)$, $e=e(F)$, and $w=\wt(F)$ from $\CMF_F$.  (The corresponding statement in the unweighted case is \cite[Lemma~2.5]{LT}.)  Consider the $\mathbb{Q}$-linear map
\[\varphi_{t,u,v}\colon \Mac^2\to \mathbb{Q}[t,u,v]\]
defined on the power-sum basis by
\[\varphi_{t,u,v}(p_{\bLambda})=t^{n}(1-u)^{n-\ell(\bLambda)}v^{w} \quad \text{ for } \bLambda\partn(n,w).\]
By \Cref{power-CMF-forest},
\begin{align*}
\varphi_{t,u,v}(\CMF_F)&=\sum_{\bLambda\vdash (n,w)} \beta_{\bLambda}(F)(-1)^{n-\ell(\bLambda)} \varphi_{t,u,v}(p_{\bLambda})\\
&=\sum_{\bLambda\vdash (n,w)} \beta_{\bLambda}(F)(-1)^{n-\ell(\bLambda)} t^n (1-u)^{n-\ell(\bLambda)}v^{w}\\
&=t^n v^w \sum_{\bLambda\vdash(n,w)} \beta_{\bLambda}(F) (u-1)^{n-\ell(\bLambda)}\\
&=t^n v^w \sum_{\ell=1}^n \sum_{\substack{\bLambda\vdash (n,w)\\ \ell(\bLambda)=\ell}} \beta_{\bLambda}(F) (u-1)^{n-\ell}\\
&=t^n v^w \sum_{\ell=1}^n \binom{e}{n-\ell}(u-1)^{n-\ell}\\
&=t^n u^{e} v^w.
\end{align*}
where the second-to-last equality follows from~\eqref{sum-b}.

Now, we claim that the map $\gamma$ defined by
\[\gamma=\varphi_{wx,w^{-1}z,y}*\varphi_{w,w^{-1},1}\]
has the desired property~\eqref{desired-gamma}.  Indeed, by \Cref{cor:convolution},
\begin{align*}
(\varphi_{wx,w^{-1}z,y}*\varphi_{w,w^{-1},1})(\CMF_F)&=\sum_{A\subseteq V(F)}\varphi_{wx,w^{-1}z,y}(\CMF_{\restr{F}{A}})\cdot\varphi_{w,w^{-1},1}(\CMF_\restr{F}{\overline{A}})\\
&=\sum_{A\subseteq V(F)}(wx)^{\left|A\right|}(w^{-1}z)^{\interior(A)}(y)^{\wt(A)}(w)^{|\overline{A}|}(w^{-1})^{\interior(\overline{A})}\\
&=\sum_{A\subseteq V(F)}w^{n(F)-\interior(A)-\interior(\overline{A})}x^{\left|A\right|}y^{\wt(A)}z^{\interior(A)}\\
&=\sum_{A\subseteq V(F)}w^{\ext(A)+c(F)}x^{\left|A\right|}y^{\wt(A)}z^{\interior(A)}\\
&=w^{c(F)}\textbf{G}_F(w,x,y,z)
\end{align*}
and $c(F)=n(F)-e(F)$ can be recovered from $\CMF_F$ by the first calculation, completing the proof.
\end{proof}

The second proof of Theorem~\ref{thm:MacMahonCrew} uses the method of Aliste-Prieto et~al.~\cite[Thm.~6]{AJMWZ}: we express the coefficients of $\EGDP_{F}(w,x,y,z)$ as explicit linear combinations of the coefficients of $\CMF_F$.

We first need some notation.  For vector partitions $\bLambda,\bOmega\vdash (n,w)$, define 
\begin{align*}
\binom{\bLambda}{\bOmega} = \prod_{i=1}^{a}\prod_{j=1}^{d}\binom{m_{i,j}(\bLambda)}{m_{i,j}(\bOmega)}
\end{align*}
where $m_{i,j}(\bLambda)$ is the number of times the vector $(i,j)$ appears in $\bLambda$. Observe that if $G$ is a weighted graph and $\bLambda=\bitype(S)$ for some $S\subseteq E(G)$, then 
\[\binom{\bitype(S)}{\bOmega}=|\{A\subseteq V(G):\ \bitype(A)=\bOmega,\ S\subseteq E(A)\cup E(\overline{A})\}|.\]

We need the following simple combinatorial identity \cite[Lemma~5]{AJMWZ}:
for every set $P$ and every $q\in\NN$, we have
\begin{equation} \label{useful-lemma}
\sum_{P'\subseteq P} (-1)^{|P'|+q} \binom{|P'|}{q} = \begin{cases} 1 & \text{ if } |P|=q, \\ 0 & \text{ if } |P|\neq q.\end{cases}
\end{equation}

\begin{proof}[Second proof of Theorem~\ref{thm:MacMahonCrew}]
We claim that for every weighted forest $F=(V,E,\wt)$, we have
\begin{equation}\label{g-from-omega}
g_F(a,b,c,d)=\sum_{\bLambda\vdash(n,w)}\beta_{\bLambda}(F)(-1)^{n-\ell(\bLambda)}\omega(\bLambda,a,b,c,d)
\end{equation}
where
\[\omega(\bLambda,a,b,c,d)=(-1)^{n-a-1}\sum_{\bOmega\partn (b,c)}\binom{c-\ell(\bOmega)}{d} \binom{\bLambda}{\bOmega} \binom{n-\ell(\bLambda)+\ell(\bOmega)-c}{n-a-d-1}.\]
Indeed, let RHS denote the right-hand side of~\eqref{g-from-omega}; then
\begin{align*}
\text{RHS}
&=\sum_{\bLambda\vdash (n,w)}(-1)^{n-\ell(\bLambda)}|\{S\subseteq E:\bitype(S)=\bLambda\}|(-1)^{n-a-1}\sum_{\bOmega\vdash (b,c)}\binom{c-\ell(\bOmega)}{d} \binom{\bLambda}{\bOmega} \binom{n-\ell(\bLambda)+\ell(\bOmega)-c}{n-a-d-1}\\
&=\sum_{S\subseteq E}(-1)^{n-\ell(\bitype(S))}(-1)^{n-a-1}\sum_{\bOmega\vdash (b,c)}\binom{c-\ell(\bOmega)}{d} \binom{\bitype(S)}{\bOmega} \binom{n-\ell(\bitype(S))+\ell(\bOmega)-c}{n-a-d-1}\\
&=\sum_{S\subseteq E}(-1)^{|S|+n-a-1}\sum_{\bOmega\vdash (b,c)}\binom{c-\ell(\bOmega)}{d}\binom{\bitype(S)}{\bOmega} \binom{|S|+\ell(\bOmega)-c}{n-a-d-1}\\
\intertext{(since $S$ is a forest and thus has $n-\ell(\bitype(S))$ edges)}
&=\sum_{S\subseteq E}(-1)^{|S|+n-a-1}\sum_{\bOmega\vdash (b,c)}\sum_{\substack{A\subseteq V\\ |A|=b,\ \wt(A)=c,\ \bitype(A)=\bOmega,\\ S\subseteq E(A)\cup E(\overline{A})}}\binom{c-\ell(\bOmega)}{d} \binom{|S|+\ell(\bOmega)-c}{n-a-d-1}
\intertext{(since $\binom{\bitype(S)}{\bOmega}=|\{A\subseteq V(G):\bitype(A)=\bOmega,\ S\subseteq E(A)\cup E(\overline{A})\}|$)}
&=\sum_{S\subseteq E}(-1)^{|S|+n-a-1}\sum_{\substack{A\subseteq V:\\ |A|=b,\ \wt(A)=c,\\ S\subseteq E(A)\cup E(\overline{A})}} \binom{c-\ell(\bitype(S))}{d} \binom{|S|+\ell(\bitype(S))-c}{n-a-d-1}\\
&=\sum_{\substack{A\subseteq V\\ |A|=b,\ \wt(A)=c}}\sum_{S\subseteq E(A)\cup E(\bar{A})}(-1)^{|S|+n-a-1} \binom{c-\ell(\bitype(S))}{d} \binom{|S|+\ell(\bitype(S))-c}{n-a-d-1}\\
&=\sum_{\substack{A\subseteq V\\ |A|=b,\ \wt(A)=c}}\sum_{S(A)\subseteq E(A)}\sum_{S(\bar{A})\subseteq E(\bar{A})}(-1)^{|S(A)|+|S(\bar{A})|+n-a-1} \binom{|S(A)|}{d} \binom{|S(\bar{A})|}{n-a-d-1}\\
&=\sum_{\substack{A\subseteq V\\ |A|=b,\ \wt(A)=c}}\left(\sum_{S(A)\subseteq E(A)}(-1)^{|S(A)|+d} \binom{|S(A)|}{d}\right)  \left(\sum_{S(\bar{A})\subseteq E(\bar{A})}(-1)^{|S(\bar{A})|+n-a-d-1} \binom{|S(\bar{A})|}{n-a-d-1}\right)\\
&=|\{A\subseteq V: \ext(A)=a,\ |A|=b, \ \wt(A)=c,\ \interior(A)=d \}|
\end{align*}
and applying~\eqref{useful-lemma} yields the left-hand side of~\eqref{g-from-omega}.
\end{proof}

\section{Additional remarks}\label{sec:additional}

\subsection{Chromatic bases of MacMahon symmetric functions} \label{sec:chromatic-bases}

The theory of \textit{chromatic bases of symmetric functions} was introduced in \cite{CvW} (and anticipated in \cite{Scott}).
Let $\mathcal{G}=\{G_n\mid n\in\Pp\}$ be a family of connected graphs such that $G_n$ has $n$ vertices.  Then the chromatic symmetric functions $\CSF_{G_n}$ are algebraically independent, and the free polynomial algebra they generate is the symmetric functions.  To see this, for each partition $\lambda=(\lambda_1,\dots,\lambda_\ell)$, let $G_\lambda$ be the disjoint union $G_{\lambda_1}+\cdots+G_{\lambda_\ell}$.  By Stanley's formula, the matrix that expresses the chromatic symmetric functions of the $G_\lambda$ in the power-sum basis is triangular, hence invertible, so that they form a graded basis for $\Lambda$.  The case that $G_n$ is the star on $n$ vertices has proven especially useful in approaches to Stanley's problem; see, e.g., \cite{APdMOZ,GOT}.

There is an analogous notion of chromatic bases for MacMahon symmetric functions.  The statement is slightly different due to the requirement that weight functions be positive.  Observe that $\Mac^m$ has a Hopf subalgebra $\Mac^m_+$ generated by the MacMahon power-sum functions $p_{\bLambda}$ for which every part of $\bLambda$ is a vector in $\Pp^m$ (as opposed to $\Nn^m\sm\{(0,\dots,0)\}$); this property is preserved by the product and coproduct formulas \eqref{product},~\eqref{coproduct}.  The theory of chromatic bases then carries over with no essential change from symmetric functions to MacMahon symmetric functions:

\begin{proposition}\label{prop:chromatic-basis}
Let $\mathcal{G}=\{G_{n,w} \mid n,w\in\Pp\}$ be a family of connected weighted graphs, where each $G_{n,w}$ has $n$ vertices and total weight $w$.  Then the family $\{\CMF_{G_{n,w}}\}$ generates $\Mac^2_+$ as a free polynomial (Hopf) algebra.
\end{proposition}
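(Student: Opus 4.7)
The plan is to adapt the chromatic basis argument of \cite{CvW} to the MacMahon setting, replacing integer partitions by vector partitions with parts in $\Pp^2$. For each such vector partition $\bLambda=(\blambda^{(1)},\dots,\blambda^{(k)})$, define $G_{\bLambda}=G_{\blambda^{(1)}}\sqcup\cdots\sqcup G_{\blambda^{(k)}}$, so that $\CMF_{G_{\bLambda}}=\prod_i\CMF_{G_{\blambda^{(i)}}}$ by multiplicativity of the CMF over disjoint unions (proper colorings of a disjoint union split as pairs of proper colorings on the components). It suffices to prove that the collection $\{\CMF_{G_{\bLambda}}\}$ is a vector space basis of $\Mac^2_+$: algebraic independence of the generators $\CMF_{G_{n,w}}$ and their generation of $\Mac^2_+$ as a polynomial algebra then follow immediately, and the Hopf algebra structure is inherited via the Hopf subalgebra inclusion $\Mac^2_+\hookrightarrow\Mac^2$ already noted in the paper.

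The key step is to show that the transition matrix expressing $\CMF_{G_{\bLambda}}$ in the MacMahon power-sum basis $\{p_{\bOmega}\}$ is triangular with nonzero diagonal entries, with respect to the refinement partial order on vector partitions (where $\bOmega\leq\bLambda$ iff the parts of $\bOmega$ can be grouped into blocks, each summing to a part of $\bLambda$). Triangularity follows by applying \Cref{power-CMF} to each factor: each $\bitype(S_i)$ is a vector partition of $\blambda^{(i)}$, so every vector partition appearing as an index in the expanded product is a refinement of $\bLambda$. For the coefficient of $p_{\bLambda}$ itself, the identity $\sum_i \ell(\bitype(S_i)) = \ell(\bLambda) = k$ forces $\ell(\bitype(S_i)) = 1$ for each $i$, i.e.\ $\bitype(S_i)=(\blambda^{(i)})$, which means $(V,S_i)$ is a connected spanning subgraph of $G_{\blambda^{(i)}}$. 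The coefficient thus factors as $\prod_i c_i$, where $c_i=\sum_{S_i\subseteq E(G_{\blambda^{(i)}}),\,(V,S_i)\text{ connected}}(-1)^{|S_i|}$.

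It remains to verify that each $c_i$ is nonzero. The quantity $c_i$ depends only on the underlying unweighted graph of $G_{\blambda^{(i)}}$, since the weight function plays no role in determining which edge subsets connect the graph. Via the chromatic polynomial specialization $\CSF_G(1,\ldots,1,0,\ldots)=P(G,q)$ with $q$ ones, $c_i$ equals (up to sign) the coefficient of $q$ in the chromatic polynomial of $G_{\blambda^{(i)}}$, which is nonzero for any connected graph by Whitney's broken-circuit theorem. This is exactly the nonvanishing input used in \cite{CvW} to prove the unweighted case. The main obstacle---in truth, very mild---is only the bookkeeping involved in phrasing the refinement partial order and length-counting argument in terms of vector partitions rather than integer partitions; there is no new conceptual content beyond the unweighted case.
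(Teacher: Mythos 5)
Your proposal is correct and is essentially the paper's own argument: the paper proves \Cref{prop:chromatic-basis} by carrying over the triangularity-in-the-power-sum-basis argument of \cite{CvW}, using the expansion of \Cref{power-CMF} for the disjoint unions $G_{\bLambda}$ exactly as you do, with the refinement order on vector partitions and the nonvanishing diagonal entries (the signed count of connected spanning edge subsets, nonzero by Whitney's theorem) supplying the invertibility. Your write-up simply makes explicit the bookkeeping that the paper asserts ``carries over with no essential change.''
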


\subsection{Multiweighted forests} \label{sec:multiweighted}
Crew's conjecture generalizes easily to weighted graphs whose weights are integer vectors.  Fix $r\in\Pp$, and consider a graph $G=(V,E)$ together with a weight function $\wt\colon V\to\Pp^r$.  Let $C_1,\dots,C_k$ be the connected components of $G$.
In analogy to Definitions~\ref{defn:bitype} and~\ref{defn:CMF}, define the \defterm{weighted type} of $G$ as the vector partition of width~$r+1$ given by
\[\wtype(G)=\big( (n(C_1),\wt(C_1)), \; \dots, \; (n(C_k),\wt(C_k)) \big) \partn (n(G),\wt(G)),\]
and the \defterm{chromatic MacMahon symmetric function} of $G$ as the element of $\Mac^{r+1}$ given by
\[\CMF_G = \sum_{\kappa\in\Col(G)} \prod_{v\in V} x_{\kappa(v)}\prod_{i=1}^r y_{i,\kappa(v)}^{\wt_i(v)}.\]
This power series is an element of $\Mac^{r+1}$, where the alphabet $\{x_1,x_2,\dots\}$ records numbers of vertices and the $r$ alphabets $\{y_{1,1},y_{2,1},\dots\},\;\dots,\;\{y_{1,r},y_{2,r},\dots\}$ record weights.
Finally, analogously to Definition~\ref{defn:EGDP}, define the \textbf{extended generalized degree polynomial} of $G$ as
\[\tilde{\mathbf{G}}_{G} = \sum_{A\subseteq V}w^{\ext(A)}x^{|A|}y_1^{\wt_1(A)}\cdots y_r^{\wt_r(A)}z^{\interior(A)}.\]

With this setup, Proposition~\ref{power-CMF} and Corollary~\ref{power-CMF-forest} (expanding the CMF in the power-sum basis) go through with no changes other than replacing $\bitype(S)$ with $\wtype(S)$.  Likewise, the Hopf-theoretic proof of Theorem~\ref{thm:MacMahonCrew} goes through with little change.  First, the map $\varphi_{t,u,v_1,\dots,v_r}\colon M^{r+1}\rightarrow \mathbb{Q}[t,u,v_1,\dots,v_r]$ defined on the power-sum basis by
\[\varphi_{t,u,v_1,\dots,v_r}(p_{\boldsymbol{\Lambda}}(x,y_1,\dots,y_r))=t^{n}(1-u)^{n-\ell(\boldsymbol{\Lambda})}\prod_{i=1}^rv_i^{w_i}\]
can be shown to satisfy
\[\varphi_{t,u,v_1,\dots,v_r}(\CMF_F)=t^{n(F)}u^{e(F)}\prod_{i=1}^rv_i^{\wt_i(F)}\]
for every forest $F$, and then the map $\gamma\colon M^{r+1}\rightarrow \mathbb{Q}(w,x_1,\dots,x_r,y,z)$ defined by
\[\gamma=\varphi_{wx,w^{-1}z,y_1,\dots,y_r}*\varphi_{w,w^{-1},1,\dots,1}\]
has the property
\[\gamma(\CMF_F)=y^{c(F)}\tilde{\mathbf{G}}_F(w,x_1,\dots,x_r,y,z)\]
thus showing that the extended GDP of a $\Pp^r$-weighted forest can be recovered from its chromatic MacMahon symmetric function.

The theory of chromatic bases also extends easily to the multiweighted setting.  In analogy to Proposition~\ref{prop:chromatic-basis}: for any family $\{G_{n,w_1,\dots,w_r} \mid n,w_1,\dots,w_r\in\Pp\}$ of connected weighted graphs, where each $G_{n,w}$ has $n$ vertices and total weight $(w_1,\dots,w_r)$, their chromatic MacMahon symmetric functions generate $\Mac^{r+1}_+$ as a free polynomial (Hopf) algebra.

\bibliographystyle{amsalpha}
\bibliography{biblio}
\end{document}